\definecolor{dunkelgrau}{rgb}{0.8,0.8,0.8}
\definecolor{hellgrau}{rgb}{0.9,0.9,0.9}
\newtheorem{theorem}{Theorem}
\newtheorem{algorithm}[theorem]{Algorithm}
\newtheorem{axiom}{Axiom}
\newtheorem{definition}[axiom]{Definition}
\newtheorem{lemma}[theorem]{Lemma}
\newtheorem{problem}[theorem]{Problem}
\newenvironment{remark}{\rem\rm}{\endrem}
\newcounter{unnumber}
\newenvironment{proof}{\prf\rm}{\hfill{$\blacksquare$}\endprf}
\newcommand{\R}{\mathbb{R}}%
\newcommand{\e}{\varepsilon}%
\newcommand{\ol}{\overline}%
\renewcommand{\>}{\right\rangle}
\newcommand{\<}{\left\langle}
\DeclareMathOperator*\inte{int}%
\DeclareMathOperator*\dom{dom}%
\DeclareMathOperator*\B{\overline{\R}}%
\DeclareMathOperator*\argmin{argmin}
\DeclareMathOperator*\dist{dist}
\DeclareMathOperator*\crit{crit}
\title{Proximal-gradient algorithms for fractional programming}
\author{Radu Ioan Bo\c{t} \thanks{University of Vienna, Faculty of Mathematics, Oskar-Morgenstern-Platz 1, A-1090 Vienna, Austria,
email: radu.bot@univie.ac.at. } \and
Ern\"{o} Robert Csetnek \thanks {University of Vienna, Faculty of Mathematics, Oskar-Morgenstern-Platz 1, A-1090 Vienna, Austria,
email: ernoe.robert.csetnek@univie.ac.at. Research supported by FWF (Austrian Science Fund), Lise Meitner Programme, project M 1682-N25.}}
\begin{document}
\maketitle

\noindent \textbf{Abstract.} In this paper we propose two proximal gradient algorithms for fractional programming problems in real Hilbert spaces, where 
the numerator is a proper, convex and lower semicontinuous function and the denominator is a smooth function, either concave or convex. 
In the iterative schemes, we perform a proximal step with respect to the nonsmooth numerator and a gradient step with respect to the smooth 
denominator. The algorithm  in case of a concave denominator has the particularity that it generates sequences which approach both the (global) optimal solutions set and the optimal objective value of the underlying fractional programming problem. In case 
of a convex denominator the numerical scheme approaches the set of critical points of the objective function, provided the latter satisfies 
the Kurdyka-\L{}ojasiewicz property. 
\vspace{1ex}

\noindent \textbf{Key Words.} fractional programming, forward-backward algorithm, convergence rate, convex subdifferential, 
limiting subdifferential, Kurdyka-\L{}ojasiewicz property\vspace{1ex}

\noindent \textbf{AMS subject classification.} 65K05, 90C25, 90C32

\section{Introduction and preliminaries}

Consider the fractional programming problem 
\begin{equation}\label{fr-intr} \ol\theta:=\inf_{x\in S}\frac{f(x)}{g(x)}, 
\end{equation}
where $S$ is a nonempty subset of a real Hilbert space ${\cal H}$, the function $f$ is nonnegative and the function $g$ is positive on $S$. One of the classical methods to handle \eqref{fr-intr} 
is Dinkelbach's procedure (see \cite{dinkelbach, cr}) which relates it to the following optimization problem 
\begin{equation}\label{dink} \inf_{x\in S}\{f(x)-\ol\theta g(x)\}.\end{equation}
If \eqref{fr-intr} has an optimal solution $\bar x \in S$, then this is also an optimal solution to \eqref{dink} and the optimal objective value of the latter is equal to zero. Vice-versa, if \eqref{dink}
has $\bar x \in S$ as an optimal solution and its optimal objective value is equal to zero, then $\bar x$ is an optimal solution to \eqref{fr-intr}, too. This shows that finding an optimal solution to \eqref{fr-intr} can be 
approached by algorithms which solve \eqref{dink}. However, one drawback of this procedure is that this can be done 
in the very restrictive case when the optimal objective value of \eqref{fr-intr} is known.   

One can find in the literature (see \cite{dinkelbach, cr, schaible, ibaraki1981, ibaraki1983}) an iterative scheme which, in the attempt to overcome this drawback in finite-dimensional spaces, requires the solving 
in each iteration $k \geq 0$ of the optimization problem 
\begin{equation}\label{dink-alg} \inf_{x\in S}\{f(x)-\theta_k g(x)\},
\end{equation}
while $\theta_k$ is updated by $\theta_{k+1}:=\frac{f(x^k)}{g(x^k)}$, where $x^k$ is an optimal solution of \eqref{dink-alg}. However, solving in each iteration an optimization problem of type \eqref{dink-alg} can be as expensive and difficult
as solving the fractional programming problem \eqref{fr-intr}. 

The aim of this note is to propose an alternative to this approach. Namely, we formulate two iterative schemes for solving \eqref{fr-intr}, 
where $f : {\cal H} \rightarrow \overline \R$ is proper, convex and lower semicontinuous and $g : {\cal H} \rightarrow \R$ is differentiable with Lipschitz continuous gradient and
either concave or convex. Instead of solving in each iteration \eqref{dink-alg}, the proposed iterative methods perform a gradient step 
with respect to $g$ and a proximal step with respect to $f$. In this way, the functions $f$ and $g$ are processed separately in each 
iteration. A further advantage of the algorithm investigated in case $g$ is concave comes from the fact that it generates sequences 
that concomitantly approach the set of optimal solutions and the optimal objective value of \eqref{fr-intr}. The second numerical scheme, proposed in case $g$ is convex, has the particularity that it approaches the set of critical points of the objective function of \eqref{fr-intr}, 
provided the latter satisfies the Kurdyka-\L{}ojasiewicz property.  

For the notations used in this paper we refer the reader to \cite{bo-van, bauschke-book, EkTem, Zal-carte}. Let ${\cal H}$ be a real Hilbert space 
with \textit{inner product} $\langle\cdot,\cdot\rangle$ and associated \textit{norm} $\|\cdot\|=\sqrt{\langle \cdot,\cdot\rangle}$. 
The symbols $\rightharpoonup$ and $\rightarrow$ denote weak and strong convergence, respectively. 

For a function $f:{\cal H}\rightarrow\overline{\R}$, where $\overline{\R}:=\R\cup\{\pm\infty\}$ is the extended real line, we denote
by $\dom f=\{x\in {\cal H}:f(x)<+\infty\}$ its \textit{effective domain} and say that $f$ is \textit{proper} if $\dom f\neq\emptyset$
and $f(x)\neq-\infty$ for all $x\in {\cal H}$. 
The \textit{subdifferential} of $f$ at $x\in {\cal H}$, with $f(x)\in\R$, is the set
$\partial f(x):=\{v\in {\cal H}:f(y)\geq f(x)+\langle v,y-x\rangle \ \forall y\in {\cal H}\}$.
We take by convention $\partial f(x):=\emptyset$, if $f(x)\in\{\pm\infty\}$. Let $S\subseteq {\cal H}$ be a nonempty set. 
The \textit{indicator function} of $S$, $\delta_S:{\cal H}\rightarrow \overline{\R}$, 
is the function which takes the value $0$ on $S$ and $+\infty$ otherwise.

An efficient tool for proving weak convergence of a sequence in Hilbert spaces (without a priori knowledge of its limit) is the  Opial Lemma, which we recall in the following. 

\begin{lemma}\label{opial} (Opial) Let $C$ be a nonempty set of ${\cal H}$ and $(x_k)_{k \geq 0}$ be a sequence in ${\cal H}$ such that
the following two conditions hold: \begin{itemize}\item[(a)] for every $x\in C$, $\lim_{k \rightarrow + \infty}\|x_k-x\|$ exists;
\item[(b)] every weak sequential cluster point of $(x_k)_{k \geq 0}$ is in $C$;\end{itemize}
Then $(x_k)_{k \geq 0}$ converges weakly to an element in $C$.
 \end{lemma}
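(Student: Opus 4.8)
The plan is to reduce everything to two facts: the sequence $(x_k)_{k\geq 0}$ possesses at least one weak sequential cluster point, and it possesses at most one. Once these are in place, condition (b) locates that unique cluster point in $C$, and a bounded sequence having a single weak sequential cluster point must converge weakly to it, which is exactly the conclusion.

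First I would observe that, since $C\neq\emptyset$, we may fix some $x\in C$; condition (a) then guarantees that $\lim_{k\to+\infty}\|x_k-x\|$ exists and is finite, so $(x_k)_{k\geq 0}$ is bounded. As bounded sequences in a Hilbert space admit weakly convergent subsequences, a weak sequential cluster point exists. For uniqueness I would take two weak sequential cluster points $\bar x$ and $\bar y$; by (b) both lie in $C$, hence by (a) the numbers $\ell_{\bar x}:=\lim_{k\to+\infty}\|x_k-\bar x\|$ and $\ell_{\bar y}:=\lim_{k\to+\infty}\|x_k-\bar y\|$ are well defined. Expanding the squared norms yields, for every $k\geq 0$,
\[
\|x_k-\bar x\|^2-\|x_k-\bar y\|^2 = 2\langle x_k,\bar y-\bar x\rangle + \|\bar x\|^2-\|\bar y\|^2 ,
\]
so the left-hand side converges to $\ell_{\bar x}^2-\ell_{\bar y}^2$ and therefore $\langle x_k,\bar y-\bar x\rangle$ converges to some $c\in\R$. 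Passing to the limit along a subsequence weakly convergent to $\bar x$ gives $\langle\bar x,\bar y-\bar x\rangle=c$, while passing to the limit along one weakly convergent to $\bar y$ gives $\langle\bar y,\bar y-\bar x\rangle=c$; subtracting these identities yields $\|\bar x-\bar y\|^2=0$, i.e.\ $\bar x=\bar y$.

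To finish, I would argue that a bounded sequence with a unique weak sequential cluster point $\bar x$ converges weakly to $\bar x$: otherwise some weakly open neighbourhood of $\bar x$ would be avoided by a subsequence, which by boundedness would have a further weakly convergent subsequence whose limit must lie in the weakly closed complement of that neighbourhood, producing a second cluster point distinct from $\bar x$ — a contradiction. Hence $(x_k)_{k\geq 0}$ converges weakly to $\bar x\in C$. The only genuine subtlety is the uniqueness step: one must use that \emph{both} candidate limits belong to $C$, precisely so that assumption (a) can be invoked for each of them, and then combine the two limiting identities to eliminate the difference $\bar x-\bar y$.
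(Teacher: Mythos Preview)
Your proof is correct and is in fact the standard argument for the Opial Lemma. Note, however, that the paper does not supply its own proof of this statement: Lemma~\ref{opial} is merely recalled as a classical tool, with no accompanying \texttt{proof} environment, so there is nothing in the paper to compare your argument against beyond the bare statement. Your write-up would serve perfectly well as the missing proof.
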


When proving the first part of the Opial Lemma one usually tries to show that for every $x \in C$ the sequence $(\|x_k-x\|)_{k \geq 0}$ fulfills a Fej\'{e}r-type inequality. In this sense the following result is very useful. 

\begin{lemma}\label{Fejer-real-seq} Let $(a_k)_{k \geq 0}$, $(b_k)_{k \geq 0}$ and $(\varepsilon_k)_{k \geq 0}$ be real sequences. 
Assume that $(a_k)_{k \geq 0}$ is bounded from below, $(b_k)_{k \geq 0}$ is nonnegative, $(\varepsilon_k)_{k \geq 0} \in\ell^1$ and 
$a_{k+1}-a_k+b_k\leq\varepsilon_k$ for every $k \geq 0$. Then $(a_k)_{k \geq 0}$ is convergent and $(b_k)_{k \geq 0} \in\ell^1$.
\end{lemma}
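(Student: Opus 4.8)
The plan is to reduce the statement to the elementary fact that a nonincreasing real sequence which is bounded from below converges. First I would introduce the auxiliary sequence
\[
d_k := a_k - \sum_{j=0}^{k-1}\varepsilon_j \quad (k \geq 1), \qquad d_0 := a_0,
\]
which is well defined because $(\varepsilon_k)_{k\geq 0}\in\ell^1$; in particular the partial sums $\sum_{j=0}^{k-1}\varepsilon_j$ converge to $\sum_{j=0}^{\infty}\varepsilon_j \in \R$. From the hypothesis $a_{k+1}-a_k+b_k\leq\varepsilon_k$ and the nonnegativity of $(b_k)_{k\geq 0}$ we get $a_{k+1}-a_k\leq\varepsilon_k$ for every $k\geq 0$, which rearranges to $d_{k+1}-d_k = a_{k+1}-a_k-\varepsilon_k\leq 0$; hence $(d_k)_{k\geq 0}$ is nonincreasing.

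Next I would show that $(d_k)_{k\geq 0}$ is bounded from below. Setting $M:=\sum_{j=0}^{\infty}|\varepsilon_j|<+\infty$ and $m:=\inf_{k\geq 0}a_k>-\infty$, we have $\sum_{j=0}^{k-1}\varepsilon_j\leq M$, so $d_k=a_k-\sum_{j=0}^{k-1}\varepsilon_j\geq m-M$ for all $k\geq 0$. A nonincreasing sequence bounded from below converges; let $d_\infty:=\lim_{k\to+\infty}d_k\in\R$. Then
\[
a_k = d_k + \sum_{j=0}^{k-1}\varepsilon_j \longrightarrow d_\infty + \sum_{j=0}^{\infty}\varepsilon_j \in \R,
\]
which proves that $(a_k)_{k\geq 0}$ is convergent.

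Finally, for the summability of $(b_k)_{k\geq 0}$ I would sum the defining inequality and telescope: for every $N\geq 0$,
\[
\sum_{k=0}^{N} b_k \leq \sum_{k=0}^{N}\bigl(a_k - a_{k+1} + \varepsilon_k\bigr) = a_0 - a_{N+1} + \sum_{k=0}^{N}\varepsilon_k \leq a_0 - m + M,
\]
using $a_{N+1}\geq m$. Thus the partial sums of the series with nonnegative terms $\sum_{k}b_k$ are bounded, whence $(b_k)_{k\geq 0}\in\ell^1$.

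There is no genuinely hard step here; the only point requiring a little care is that $(\varepsilon_k)_{k\geq 0}$ is merely absolutely summable rather than nonnegative, so when establishing the lower bound for $(d_k)_{k\geq 0}$ one must bound the partial sums $\sum_{j=0}^{k-1}\varepsilon_j$ from above by $M$ rather than simply discard them.
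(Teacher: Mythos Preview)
Your argument is correct. The paper, however, states this lemma without proof---it is quoted as a standard auxiliary result (in the same spirit as the Opial Lemma preceding it)---so there is no proof in the paper to compare against. Your approach via the shifted sequence $d_k=a_k-\sum_{j=0}^{k-1}\varepsilon_j$ is the classical one, and your handling of the possible sign changes in $(\varepsilon_k)_{k\geq 0}$ by bounding the partial sums with $M=\sum_{j\geq 0}|\varepsilon_j|$ is exactly the right precaution.
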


The following summability result will be useful in Subsection 2.2. 

\begin{lemma}\label{fejer2} Let $(a_k)_{k\geq 0}$ and $(\e_k)_{k\geq 0}$ be nonnegative real sequences, such that 
$\sum_{k\geq 0}\e_k<+\infty$ and $a_{k+1}\le a\cdot a_k +\e_k$ for every $k\ge 0$, where $a\in\R$, $a<1.$ Then $\sum_{k\geq 0}a_k<+\infty.$
\end{lemma}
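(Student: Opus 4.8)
The plan is to reduce to the case $0\le a<1$ and then to bound the partial sums $S_N:=\sum_{k=0}^{N}a_k$ directly. Since every $a_k$ is nonnegative and $a\le a^+:=\max\{a,0\}$, we have $a\cdot a_k\le a^+\cdot a_k$, hence $a_{k+1}\le a^+a_k+\e_k$ for all $k\ge0$ with $a^+\in[0,1)$; so it suffices to prove the assertion with $a^+$ in place of $a$. (Alternatively: if $a\le0$ then $a_{k+1}\le\e_k$ and the conclusion is immediate from $\sum_{k\ge0}\e_k<+\infty$, so only the case $0<a<1$ is of real interest.) From now on assume $0\le a<1$.

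Summing $a_{k+1}\le a a_k+\e_k$ over $k=0,\dots,N$ and writing $E:=\sum_{k\ge0}\e_k<+\infty$ gives
\[
S_{N+1}-a_0=\sum_{k=0}^{N}a_{k+1}\le a\sum_{k=0}^{N}a_k+\sum_{k=0}^{N}\e_k\le a\,S_{N+1}+E ,
\]
where the last step uses $\sum_{k=0}^{N}a_k\le S_{N+1}$ (valid because $a_{N+1}\ge0$ and $a\ge0$). Rearranging yields $(1-a)S_{N+1}\le a_0+E$, so $S_{N+1}\le(a_0+E)/(1-a)$ for every $N\ge0$. As $(S_N)_{N\ge0}$ is nondecreasing (the $a_k$ being nonnegative) and bounded from above, it converges, i.e.\ $\sum_{k\ge0}a_k<+\infty$.

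An equivalent route is to unroll the recursion into $a_k\le a^k a_0+\sum_{j=0}^{k-1}a^{k-1-j}\e_j$ for $k\ge1$, sum over $k$, interchange the order of summation in the resulting double sum (legitimate since all terms are nonnegative), and evaluate the geometric series $\sum_{m\ge0}a^m=1/(1-a)$, which yields the explicit estimate $\sum_{k\ge0}a_k\le(a_0+E)/(1-a)$. Either way there is no genuine obstacle: the only points that need care are the sign of $a$ (handled by passing to $a^+$, or by the trivial case split) and the systematic use of nonnegativity to compare $\sum_{k=0}^{N}a_k$ with $S_{N+1}$ and to guarantee that the monotone sequence of partial sums converges.
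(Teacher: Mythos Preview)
Your argument is correct: the reduction to $0\le a<1$ via $a^+=\max\{a,0\}$ is legitimate because the $a_k$ are nonnegative, and the partial-sum inequality $(1-a)S_{N+1}\le a_0+E$ follows cleanly from summing the recursion and using $\sum_{k=0}^N a_k\le S_{N+1}$. The unrolled alternative is also valid and yields the same bound. Note, however, that the paper does not supply a proof for this lemma; it is stated as a standard auxiliary result and used later in Theorem~\ref{t1}, so there is no paper proof to compare against.
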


Finally, the descent lemma which we recall next is a helpful tool in the convergence analysis of the algorithms proposed in this manuscript. 

\begin{lemma}\label{desc-l-th} (see \cite[Lemma 1.2.3]{nes}) Let $g:{\cal H}\to\R$ be (Fr\'echet) differentiable with 
$L$-Lipschitz continuous gradient. Then 
$$g(y)\leq g(x)+\<\nabla g(x),y-x\>+\frac{L}{2}\|y-x\|^2 \ \forall x,y\in{\cal H}.$$
\end{lemma}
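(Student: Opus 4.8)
The plan is to reduce the claimed multivariate inequality to a one–dimensional statement along the segment joining $x$ and $y$, and then to combine the fundamental theorem of calculus with the Lipschitz bound on $\nabla g$. First I would fix arbitrary $x,y\in{\cal H}$ and introduce the auxiliary function $\varphi:[0,1]\to\R$, $\varphi(t):=g\big(x+t(y-x)\big)$. Since $g$ is Fr\'echet differentiable, the chain rule yields that $\varphi$ is differentiable on $[0,1]$ with $\varphi'(t)=\<\nabla g(x+t(y-x)),y-x\>$, and $t\mapsto\varphi'(t)$ is continuous because $\nabla g$ is (being Lipschitz) continuous; hence $\varphi\in C^1([0,1])$ and the fundamental theorem of calculus applies, giving
$$g(y)-g(x)=\varphi(1)-\varphi(0)=\int_0^1\varphi'(t)\,dt=\int_0^1\<\nabla g(x+t(y-x)),y-x\>\,dt.$$

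Next I would subtract the linear term, written as $\<\nabla g(x),y-x\>=\int_0^1\<\nabla g(x),y-x\>\,dt$, from both sides to obtain
$$g(y)-g(x)-\<\nabla g(x),y-x\>=\int_0^1\<\nabla g(x+t(y-x))-\nabla g(x),y-x\>\,dt.$$
Applying the Cauchy--Schwarz inequality under the integral and then the $L$-Lipschitz continuity of $\nabla g$, which gives $\|\nabla g(x+t(y-x))-\nabla g(x)\|\le L\,\|t(y-x)\|=Lt\,\|y-x\|$ for every $t\in[0,1]$, the right-hand side is bounded above by $\int_0^1 Lt\,\|y-x\|^2\,dt=\frac{L}{2}\|y-x\|^2$, which is precisely the asserted estimate.

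The only point that demands a little care is the justification of the fundamental theorem of calculus for the composite map $\varphi$ in the Hilbert-space setting, i.e. verifying that $\varphi$ is genuinely $C^1$ on $[0,1]$; this is exactly where the continuity of $\nabla g$ (a consequence of its Lipschitz continuity) enters. Once this is in place, the rest — the chain rule and the Cauchy--Schwarz step — is entirely routine, so I do not anticipate any substantial obstacle. (Alternatively, one could bypass the integral representation altogether by applying a mean-value argument to a suitably chosen scalar function, but the integral approach is the cleanest and is the one recorded in \cite[Lemma 1.2.3]{nes}.)
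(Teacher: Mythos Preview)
Your argument is correct and is precisely the standard proof of the descent lemma. Note, however, that the paper does not supply its own proof of this statement at all: it merely records the lemma with a reference to \cite[Lemma 1.2.3]{nes} and uses it as a black box. So there is no ``paper's proof'' to compare against; your write-up simply fills in the details that the authors outsourced to Nesterov's book, and it does so by the same route taken there (integral representation along the segment, Cauchy--Schwarz, Lipschitz bound).
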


\section{Two proximal-gradient algorithms}

In this section we propose two proximal-gradient algorithms for solving \eqref{fr-intr} and investigate their convergence properties. We treat the situations when $g$ is either a convex or a concave function
separately.

\subsection{Concave denominator}\label{subsec21}

The problem that we investigate throughout this subsection has the following formulation.

\begin{problem}\label{pr} We are interested in solving the fractional programming problem 
\begin{equation}\label{fr-pr} \ol\theta:=\inf_{x\in S}\frac{f(x)}{g(x)}
\end{equation}
where ${\cal H}$ is a real Hilbert space, $S$ is a nonempty, convex and closed subset of ${\cal H}$, 
$C:=\left\{\ol x\in S:\ol\theta=\frac{f(\ol x)}{g(\ol x)}\right\}\neq\emptyset$ and the following conditions hold: 
\begin{align*}(H_f)& \ f:{\cal H}\rightarrow \B \mbox{ is proper, convex, lower semicontinuous such that }dom f \cap S \neq \emptyset  \mbox{ and}\\ 
& f(x)\geq 0 \ \forall x\in S;\\
(H_g)& \ g:{\cal H}\rightarrow \R \mbox{ is concave, (Fr\'{e}chet) differentiable with }L\mbox{-Lipschitz continuous gradient,}\\
& \mbox{ and there exists }M>0\mbox{ such that }0<g(x)\leq M \ \forall x\in S. \end{align*}
\end{problem}

To this aim we propose the following algorithm.

\begin{algorithm}\label{alg-fr-pr-fb}$ $

\noindent\begin{tabular}{rl}
\verb"Initialization": & \verb"Choose" $x^0\in S \cap \dom f$ and set $\theta_1:=\frac{f(x^0)}{g(x^0)}$;\\
\verb"For" $k\geq 1$ \verb"set": &  $\eta_k:=\frac{1}{2L\theta_k}$;\\
                     & $x^k:=\argmin\nolimits_{x\in S}\left[f(x)+\frac{1}{2\eta_k}\left\|x-(x^{k-1}+\theta_k\eta_k\nabla g(x^{k-1}))\right\|^2\right]$;\\
                     & $\theta_{k+1}:=\frac{f(x^k)}{g(x^k)}$.
\end{tabular}
\end{algorithm}

We are now in position to present the convergence statement of this algorithm. 

\begin{theorem}\label{conv-th} In the setting of Problem \ref{pr}, consider the sequences generated by Algorithm \ref{alg-fr-pr-fb}. The following 
statements hold: 
\begin{enumerate}
 \item[(i)] The sequence $(\theta_k)_{k\geq 1}$ is nonincreasing and $\lim_{k\rightarrow+\infty}\theta_k=\ol\theta$. Moreover, 
\begin{equation}\label{1k}0\leq \theta_{k+1}-\ol\theta \leq\frac{\theta_1(M+L\|\ol x-x^0\|^2)}{kg(\ol x)} \ \forall \ol x \in C \ \forall k \geq 1.\end{equation}
 \item[(ii)] Additionally, assume that $\inf_{x\in S}\frac{f(x)}{g(x)} > 0$. Then the sequence 
 $(x^k)_{k\geq 0}$ converges weakly to an element in $C$. 
\end{enumerate}
\end{theorem}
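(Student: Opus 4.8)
The plan is to read Algorithm \ref{alg-fr-pr-fb} as an \emph{exact} forward--backward step applied to the Dinkelbach surrogate $x\mapsto f(x)-\theta_k g(x)$ over $S$: since $g$ is concave with $L$-Lipschitz gradient, $-\theta_k g$ is convex with $\theta_k L$-Lipschitz gradient, and $x^k$ is the exact proximal point of $f+\delta_S$ of stepsize $\eta_k=\tfrac{1}{2L\theta_k}$ taken after a gradient step on $-\theta_k g$ at $x^{k-1}$. The cornerstone is the variational characterization of $x^k$: because $x^k$ minimizes over $S$ a $\tfrac{1}{\eta_k}$-strongly convex function, for every $x\in S$
\[
 f(x)-\theta_k\<\nabla g(x^{k-1}),x-x^{k-1}\>+\tfrac{1}{2\eta_k}\|x-x^{k-1}\|^2\ \geq\ f(x^k)-\theta_k\<\nabla g(x^{k-1}),x^k-x^{k-1}\>+\tfrac{1}{2\eta_k}\|x^k-x^{k-1}\|^2+\tfrac{1}{2\eta_k}\|x-x^k\|^2 .
\]
To this I would add two estimates: the descent Lemma \ref{desc-l-th} applied to $-g$ and multiplied by $\theta_k\geq 0$, which bounds $-\theta_k\<\nabla g(x^{k-1}),x^k-x^{k-1}\>$ from below by $\theta_k\bigl(g(x^{k-1})-g(x^k)\bigr)-\tfrac{\theta_k L}{2}\|x^k-x^{k-1}\|^2$; and concavity of $g$, which bounds $-\theta_k\<\nabla g(x^{k-1}),x-x^{k-1}\>$ from above by $\theta_k\bigl(g(x^{k-1})-g(x)\bigr)$.

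For (i): choosing $x=x^{k-1}$ and using $f(x^{k-1})=\theta_k g(x^{k-1})$ together with $\tfrac{1}{\eta_k}=2L\theta_k$, the inequality collapses to $f(x^k)-\theta_k g(x^k)\leq-\tfrac{3L\theta_k}{2}\|x^k-x^{k-1}\|^2\leq 0$, hence $\theta_{k+1}=\tfrac{f(x^k)}{g(x^k)}\leq\theta_k$; since $\theta_k\geq\ol\theta$ for all $k$ (each iterate lies in $S$), $(\theta_k)_{k\geq1}$ converges. For the rate I would choose $x=\ol x\in C$; after the two substitutions, and using $f(\ol x)=\ol\theta g(\ol x)$, $f(x^k)=\theta_{k+1}g(x^k)$, $g(x^k)\leq M$ and $\tfrac{1}{2\eta_k}=L\theta_k$, one lands on
\[
 (\theta_k-\ol\theta)\,g(\ol x)+L\theta_k\bigl(\|\ol x-x^k\|^2-\|\ol x-x^{k-1}\|^2\bigr)\ \leq\ (\theta_k-\theta_{k+1})\,g(x^k)\ \leq\ M(\theta_k-\theta_{k+1}) .
\]
Summing from $1$ to $k$, the right-hand side telescopes to at most $M\theta_1$, whereas the $\theta_j$-weighted differences of squared distances are handled by Abel summation and bounded below by $-\theta_1\|\ol x-x^0\|^2$ precisely because $(\theta_j)$ is nonincreasing and the squared distances are nonnegative; since $\theta_j-\ol\theta\geq\theta_{k+1}-\ol\theta$ for $j\leq k$, this gives $k(\theta_{k+1}-\ol\theta)g(\ol x)\leq\theta_1\bigl(M+L\|\ol x-x^0\|^2\bigr)$, which is \eqref{1k}, and letting $k\to+\infty$ yields $\theta_k\to\ol\theta$.

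For (ii), with $\ol\theta>0$: I would verify the Opial conditions for $C$ and invoke Lemma \ref{opial}. Dropping the nonnegative term $(\theta_k-\ol\theta)g(\ol x)$ in the displayed inequality and dividing by $L\theta_k\geq L\ol\theta>0$ gives $\|\ol x-x^k\|^2\leq\|\ol x-x^{k-1}\|^2+\tfrac{M}{L\ol\theta}(\theta_k-\theta_{k+1})$; since $\sum_k(\theta_k-\theta_{k+1})=\theta_1-\ol\theta<+\infty$, Lemma \ref{Fejer-real-seq} (with $b_k\equiv 0$) shows that $\bigl(\|\ol x-x^k\|^2\bigr)_k$, hence $\bigl(\|\ol x-x^k\|\bigr)_k$, converges for every $\ol x\in C$ --- Opial (a). For Opial (b), let $x^{k_j}\rightharpoonup x^*$; as $S$ is convex and closed it is weakly sequentially closed, so $x^*\in S$. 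Passing to the limit in the identity $0=f(x^{k_j})-\theta_{k_j+1}g(x^{k_j})$ and using weak lower semicontinuity of $f$, weak upper semicontinuity of $g$ (equivalently, weak lower semicontinuity of the convex continuous function $-g$), boundedness of $g(x^{k_j})\in(0,M]$ and $\theta_{k_j+1}\to\ol\theta$, one obtains $f(x^*)-\ol\theta g(x^*)\leq 0$; combined with $\tfrac{f(x^*)}{g(x^*)}\geq\ol\theta$ this forces $x^*\in C$. Opial's Lemma then delivers weak convergence of $(x^k)_{k\geq0}$ to a point of $C$.

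The main obstacle, and the place where the specific stepsize $\eta_k=\tfrac{1}{2L\theta_k}$ is essential, is the sign bookkeeping when the three inequalities are combined: one needs $\tfrac{1}{2\eta_k}-\tfrac{\theta_k L}{2}\geq 0$ so that the term $\|x^k-x^{k-1}\|^2$ does not spoil the estimates, and the rate bound requires treating the $\theta_k$-weighted telescoping sum by Abel summation rather than a naive cancellation, which is exactly where monotonicity of $(\theta_k)_{k\geq 1}$ re-enters.
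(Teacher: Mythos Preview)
Your proof is correct and rests on the same master inequality as the paper's: both start from the variational/strong-convexity characterization of $x^k$, combine it with concavity of $g$ and the descent lemma, and with the specific choice $\tfrac{1}{2\eta_k}=L\theta_k$ arrive at
\[
f(x)-\theta_k g(x)\geq(\theta_{k+1}-\theta_k)g(x^k)+\tfrac{L\theta_k}{2}\|x^k-x^{k-1}\|^2+L\theta_k\|x-x^k\|^2-L\theta_k\|x-x^{k-1}\|^2.
\]
From here, however, you diverge from the paper in two places, and in both your route is shorter.

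For the rate \eqref{1k}, the paper multiplies the ``$x=x^{k-1}$'' inequality by $(k-1)$, sums it, separately sums the ``$x=\ol x$'' inequality, and then adds the two resulting relations. You instead sum the single inequality at $x=\ol x$ and control $\sum_{j=1}^{k}\theta_j\bigl(\|\ol x-x^j\|^2-\|\ol x-x^{j-1}\|^2\bigr)\geq -\theta_1\|\ol x-x^0\|^2$ by Abel summation, using only that $(\theta_j)$ is nonincreasing and the squared distances are nonnegative. This yields \eqref{1k} directly and is cleaner.

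For the weak cluster point argument in (ii), the paper first proves $x^k-x^{k-1}\to 0$ and then passes to the limit in the subdifferential inclusion $0\in\partial(f+\delta_S-\ol\theta g)(x^{k_l})+\text{(vanishing term)}$, invoking the weak--strong sequential closedness of the graph of the convex subdifferential. You bypass both steps by taking the limit directly in $f(x^{k_j})=\theta_{k_j+1}g(x^{k_j})$, using weak lower semicontinuity of the convex lsc function $f$ and weak upper semicontinuity of the concave continuous function $g$ (i.e., weak lsc of $-g$). This is perfectly valid and avoids the subdifferential machinery; the paper's argument, on the other hand, additionally delivers $x^k-x^{k-1}\to 0$ and the summability $\sum_k\theta_k\|x^k-x^{k-1}\|^2<+\infty$ as by-products, which your argument does not need but also does not produce.
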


\begin{proof} According to the first order optimality conditions we have
\begin{equation}\label{opt-cond}0\in\partial (f+\delta_S)(x^k)+\frac{1}{\eta_k}\left(x^k-x^{k-1}-\theta_k\eta_k\nabla g(x^{k-1})\right) \ \forall k\geq 1.\end{equation}
A direct consequence of the definition of the convex subdifferential is the inequality 
\begin{align}\label{f-conv} f(x)-f(x^k)\geq & \ \<\frac{1}{\eta_k}(x^{k-1}-x^k)+\theta_k\nabla g(x^{k-1}),x-x^k\>\nonumber\\
= & \ \frac{1}{2\eta_k}\left(\|x^{k-1}-x^k\|^2+\|x-x^k\|^2-\|x-x^{k-1}\|^2\right)\nonumber\\
& +\theta_k\langle \nabla g(x^{k-1}),x-x^k\rangle \ \forall x\in S \ \forall k\geq 1. 
\end{align}
Invoking the concavity of $g$ and using that $\theta_k\geq 0$, we have 
\begin{equation}\label{g-conc}-\theta_k g(x)+\theta_kg(x^{k-1})
\geq \theta_k\langle \nabla g(x^{k-1}),x^{k-1}-x\rangle \ \forall x\in S \ \forall k\geq 1.\end{equation}
Combining \eqref{f-conv} and \eqref{g-conc}, we obtain 
\begin{align}\label{ineq1} f(x)-\theta_k g(x)\geq & \ \frac{1}{2\eta_k}\left(\|x^{k-1}-x^k\|^2+\|x-x^k\|^2-\|x-x^{k-1}\|^2\right)\nonumber\\
& \ + f(x^k)-\theta_kg(x^{k-1})+\theta_k\langle \nabla g(x^{k-1}),x^{k-1}-x^k\rangle \ \forall x\in S \ \forall k\geq 1.
\end{align}
Lemma \ref{desc-l-th} applied to the function $-g$ yields the inequality 
$$-\theta_kg(x^{k-1})+\theta_k\langle \nabla g(x^{k-1}),x^{k-1}-x^k\rangle\geq
-\theta_k g(x^k)-\frac{L\theta_k}{2}\|x^k-x^{k-1}\|^2 \ \forall k\geq 1,$$
hence from \eqref{ineq1} we derive 
\begin{align*}f(x)-\theta_k g(x)\geq & \ \frac{1}{2\eta_k}\left(\|x^{k-1}-x^k\|^2+\|x-x^k\|^2-\|x-x^{k-1}\|^2\right)\\
& \ + f(x^k)-\theta_kg(x^k)-\frac{L\theta_k}{2}\|x^k-x^{k-1}\|^2 \ \forall x\in S \ \forall k\geq 1.
\end{align*}
Taking into account the relation $f(x^k)=\theta_{k+1}g(x^k)$ and the way $\eta_k$ is defined, we obtain for every $x\in S$ and $k\geq 1$ the 
inequality 
\begin{equation}\label{ineq2}f(x)-\theta_k g(x)\geq(\theta_{k+1}-\theta_k)g(x^k)+\frac{L\theta_k}{2}\|x^k-x^{k-1}\|^2
+L\theta_k\|x-x^k\|^2-L\theta_k\|x-x^{k-1}\|^2.\end{equation}

(i) Taking $x:=x^{k-1}$ in \eqref{ineq2} we get 
\begin{equation}\label{ineq3}(\theta_{k+1}-\theta_k)g(x^k)+\frac{3L\theta_k}{2}\|x^k-x^{k-1}\|^2\leq 0 \ \forall k\geq 1.\end{equation}
This further implies that $(\theta_k)_{k\geq 1}$ is a nonincreasing sequence, hence convergent, since it is bounded from below by $0$. 

Consider now an arbitrary $\ol x\in C$ and take $x:=\ol x$ in \eqref{ineq2}. We derive 
\begin{align} \frac{L\theta_k}{2}\|x^k-x^{k-1}\|^2+L\theta_k\|\ol x -x^k\|^2-L\theta_k\|\ol x -x^{k-1}\|^2 &\leq (\theta_k-\theta_{k+1})M+f(\ol x)-\theta_k g(\ol x)\nonumber\\
& = (\theta_k-\theta_{k+1})M+(\ol \theta-\theta_k) g(\ol x) \nonumber \\
&\leq (\theta_k-\theta_{k+1})M \ \forall k\geq 1 \label{ineq4}.
\end{align}
This yields the inequality 
\begin{equation}\label{ineq5} \frac{L\theta_k}{2}\|x^k-x^{k-1}\|^2+L\theta_{k+1}\|\ol x-x^k\|^2-L\theta_k\|\ol x-x^{k-1}\|^2
\leq (\theta_k-\theta_{k+1})M \ \forall k\geq 1.
\end{equation}
Since $(\theta_k)_{k\geq 1}$ is bounded from below by $0$, the sequence on the right-hand side of inequality \eqref{ineq5}  belongs to $\ell^1$. 
We derive from Lemma \ref{Fejer-real-seq} that 
\begin{equation}\label{theta-xn-l2}\sum_{k\geq 1}\theta_k\|x^k-x^{k-1}\|^2<+\infty
\end{equation}
and 
\begin{equation}\label{theta-op1}(\theta_{k+1}\|\ol x-x^k\|^2)_{k\geq 0}\mbox{ is convergent}.
\end{equation}
Coming back to \eqref{ineq4} and using $\theta_k\geq \theta_{k+1}$, we obtain
\begin{equation}\label{ineq5'}
(\ol \theta-\theta_k)g(\ol x)\geq (\theta_{k+1}-\theta_k)M+\frac{L\theta_k}{2}\|x^k-x^{k-1}\|^2+L\theta_{k+1}\|\ol x -x^k\|^2-L\theta_k\|\ol x -x^{k-1}\|^2 \ \forall k \geq 1.
\end{equation}
Relying on  \eqref{theta-xn-l2} and \eqref{theta-op1} and the convergence of the sequence $(\theta_k)_{k\geq 1}$, the right-hand side 
of the above inequality is a sequence which converges to $0$ as $k\rightarrow+\infty$. Invoking also the fact that $(\theta_k)_{k\geq 1}$ 
is bounded from below by $\ol \theta$, we conclude  that $\lim_{k\rightarrow+\infty}\theta_k=\ol\theta$.

Let us prove now the convergence rate result stated in \eqref{1k}. Let $\ol x\in C$ and $n\geq 1$ be arbitrary. 
From \eqref{ineq3} we obtain $$\sum_{k=1}^n(k-1)(\theta_k-\theta_{k+1})g(x^k)\geq \frac{3}{2}L\sum_{k=1}^n(k-1)\theta_k\|x^k-x^{k-1}\|^2,$$
hence 
\begin{align*} \sum_{k=1}^n\Big((k-1)\theta_k-k\theta_{k+1}+\theta_{k+1}\Big)=& \sum_{k=1}^n(k-1)(\theta_k-\theta_{k+1})\\
\geq & \frac{3L}{2M}\sum_{k=1}^n(k-1)\theta_k\|x^k-x^{k-1}\|^2.
\end{align*}
Noticing the telescoping sum in the left-hand side of the previous inequality, we obtain
\begin{equation}\label{ineq6} -n\theta_{n+1}+\sum_{k=1}^n\theta_{k+1}\geq \frac{3L}{2M}\sum_{k=1}^n(k-1)\theta_k\|x^k-x^{k-1}\|^2.
\end{equation}

Summing up the inequalities in \eqref{ineq5'} for $k$ from $1$ to $n+1$ we obtain 
\begin{align*}
& \left((n+1)\ol\theta-\sum_{k=1}^{n+1}\theta_k\right)g(\ol x)\geq \\ 
& (\theta_{n+2}-\theta_1)M+\frac{L}{2}\sum_{k=1}^{n+1}\theta_k\|x^k-x^{k-1}\|^2 +L\theta_{n+2}\|\ol x-x^{n+2}\|^2-L\theta_1\|\ol x-x^0\|^2,
\end{align*}
hence 
\begin{align}\label{ineq7}
& (n+1)\ol\theta-\sum_{k=1}^{n}\theta_{k+1}-\theta_1\geq \nonumber\\ 
& \frac{1}{g(\ol x)}\left[(\theta_{n+2}-\theta_1)M+\frac{L}{2}\sum_{k=1}^{n+1}\theta_k\|x^k-x^{k-1}\|^2 +L\theta_{n+2}\|\ol x-x^{n+2}\|^2-L\theta_1\|\ol x-x^0\|^2\right].
\end{align}

Summing up the inequalities \eqref{ineq6} and \eqref{ineq7} and discarding the nonnegative terms on the right-hand side we derive 
$$n(\ol\theta-\theta_{n+1})+\ol\theta-\theta_1\geq -\frac{\theta_1 M}{g(\ol x)}-\frac{L\theta_1}{g(\ol x)}\|\ol x-x^0\|^2.$$
Noticing that $\theta_1\geq \ol\theta$, the last inequality implies \eqref{1k} after rearranging the terms. 

(ii) For the remaining of the proof we assume that $\inf_{x\in S}\frac{f(x)}{g(x)} > 0$. In this situation, $\lim_{k\rightarrow+\infty}\theta_k=\ol\theta>0$ and from \eqref{theta-xn-l2} and \eqref{theta-op1} 
we derive \begin{equation}\label{k-k-1}\lim_{k\rightarrow+\infty}(x^k-x^{k-1})=0\end{equation}
and \begin{equation}\label{op1}(\|\ol x-x^k\|)_{k \geq 1}\mbox{ is convergent }\forall \ol x\in C.\end{equation}
Thus the first condition in the Opial Lemma is fulfilled. 

From \eqref{opt-cond} we deduce 
$$\frac{1}{\eta_{k_l}}(x^{k_l-1}-x^{k_l})+\theta_{k_l}\nabla g(x^{{k_l}-1})\in\partial (f+\delta_S)(x^{k_l}),$$
hence 
\begin{equation}\label{opt-cond2}\frac{1}{\eta_{k_l}}(x^{{k_l}-1}-x^{k_l})+\theta_{k_l}\nabla g(x^{{k_l}-1})-\ol\theta\nabla g(x^{k_l})\in
\partial (f+\delta_S-\ol\theta g)(x^{k_l}) \ \forall l\geq 1,\end{equation}
due to the concavity of $g$ and $\ol\theta>0$. Since for every $l \geq 1$ we have 
\begin{align*}\|\theta_{k_l} \nabla g(x^{{k_l}-1})-\ol\theta\nabla g(x^{k_l})\|\leq & \ |\theta_{k_l}-\ol\theta|\|\nabla g(x^{{k_l}-1})\|+\ol\theta\|\nabla g(x^{{k_l}-1})-\nabla g(x^{k_l})\|\\
\leq &  L |\theta_{k_l}-\ol\theta|\|x^{{k_l}-1}- x^0\|\!+ \!|\theta_{k_l}-\ol\theta| \|\nabla g(x^0)\|\!+\!\ol\theta L\!\|x^{{k_l}-1}-x^{k_l}\|,
\end{align*}
from (i), \eqref{k-k-1} and the fact that $(x^{k_l})_{l \geq 0}$ is bounded, we conclude that 
$$\frac{1}{\eta_{k_l}}(x^{{k_l}-1}-x^{k_l})+\theta_{k_l}\nabla g(x^{{k_l}-1})-\ol\theta\nabla g(x^{k_l})\rightarrow 0 \mbox{ as }l\rightarrow+\infty.$$
Noticing that $(x^{k_l})_{l \geq 0}$ converges weakly to $\ol x$ as $l\rightarrow+\infty$, from \eqref{opt-cond2} and the fact that the graph 
of the convex subdifferential of a proper, convex and lower semicontinuous function is sequentially closed with respect to the 
weak-norm topology (see \cite[Proposition 20.33]{bauschke-book}), we derive that $$0\in\partial (f+\delta_S-\ol\theta g)(\ol x),$$
hence $\ol x\in\dom f\cap S$. 
The definition of the convex subdifferential yields the inequality $$f(y)-\ol\theta g(y)\geq f(\ol x)-\ol\theta g(\ol x) \ \forall y\in S.$$
From here, by choosing $y \in C$, we get 
\begin{align*}0 \geq  \ f(\ol x)-\ol\theta g(\ol x),
\end{align*}
hence \begin{equation}\label{ineq8}\ol\theta\geq \frac{f(\ol x)}{g(\ol x)}.\end{equation}
Relation \eqref{ineq8} implies now that $\ol x\in C$. Thus the second condition in the Opial Lemma is also fulfilled.
The conclusion follows now from Lemma \ref{opial}. 
\end{proof}

\subsection{Convex denominator}\label{subsec22}

In this subsection we consider the case when $g$ is a convex function. 

\begin{problem}\label{pr2} We are interested in solving the fractional programming problem 
\begin{equation}\label{fr-pr2} \inf_{x\in S}\frac{f(x)}{g(x)}
\end{equation}
where ${\cal H}$ is a real Hilbert space, $S$ is a nonempty, convex and closed subset of ${\cal H}$, 
and the following conditions hold: 
\begin{align*}(H_f)& \ f:{\cal H}\rightarrow \B \mbox{ is proper, convex, lower semicontinuous such that }dom f \cap S \neq \emptyset  \mbox{ and}\\ 
& f(x)\geq 0 \ \forall x\in S;\\
(\widetilde H_g)& \ g:{\cal H}\rightarrow \R \mbox{ is convex, continuously (Fr\'{e}chet) differentiable and there exists }M>0\\
& \mbox{such that }0<g(x)\leq M \ \forall x\in S. \end{align*}
\end{problem}

The algorithm we propose in this context has the following formulation. 

\begin{algorithm}\label{alg-fr-pr-fb2}$ $

\noindent\begin{tabular}{rl}
\verb"Initialization": & \verb"Choose" $x^0\in S\cap\dom f$ and set $\theta_1:=\frac{f(x^0)}{g(x^0)}$;\\
\verb"For" $k\geq 1$ \verb"do": &  \verb"Choose" $\eta_k>0$;\\
                     &  \verb"Set" $x^k:=\argmin\nolimits_{x\in S}\left[f(x)+\frac{1}{2\eta_k}\left\|x-(x^{k-1}+\theta_k\eta_k\nabla g(x^{k-1}))\right\|^2\right]$;\\
                     &  \verb"Set" $\theta_{k+1}:=\frac{f(x^k)}{g(x^k)}$.
\end{tabular}
\end{algorithm}

The proof of the first result in this subsection reveals the fact that when $g$ is convex one cannot expect convergence of the whole 
sequence $(x^k)_{k\geq 0}$. Furthermore, if this is the case, then the limit is not necessarily an optimal solution of \eqref{fr-pr2}, 
but a critical point of the objective function $\frac{f+\delta_S}{g}$ in the sense of the limiting subdifferential. In order to explain 
this notion, we need some prerequisites of nonsmooth analysis. 

For the following generalized subdifferential notions and their basic properties we refer to \cite{boris-carte, rock-wets}. 
Let $h:{\cal H}\rightarrow \R\cup\{+\infty\}$ be a proper and lower semicontinuous function. If $x\in\dom h$, we consider the {\it Fr\'{e}chet (viscosity)  
subdifferential} of $h$ at $x$ as being the set $$\hat{\partial}h(x):= \left \{v\in{\cal H}: \liminf_{y\rightarrow x}\frac{h(y)-h(x)-\<v,y-x\>}{\|y-x\|}\geq 0 \right \}.$$ For 
$x\notin\dom h$ we set $\hat{\partial}h(x):=\emptyset$. The {\it limiting (Mordukhovich) subdifferential} is defined at $x\in \dom h$ by 
$$\partial_L h(x):=\{v\in{\cal H}:\exists x_k\rightarrow x,h(x_k)\rightarrow h(x)\mbox{ and }\exists v_k\in\hat{\partial}h(x_k),v_k\rightharpoonup v \mbox{ as }k\rightarrow+\infty\},$$
while for $x \notin \dom h$, one takes $\partial_L h(x) :=\emptyset$. Therefore  $\hat\partial h(x)\subseteq\partial_L h(x)$ for each $x\in{\cal H}$.

When $h$ is continuously differentiable around $x \in {\cal H}$ we have $\partial_L h(x)=\{\nabla h(x)\}$. 
Notice that in case $h$ is convex, these two subdifferential notions coincide with the {\it convex subdifferential}, thus 
$\hat\partial h(x)=\partial_L h(x)=\{v\in{\cal H}:h(y)\geq h(x)+\<v,y-x\> \ \forall y\in {\cal H}\}$ for all $x\in{\cal H}$. 

The Fermat rule reads in this nonsmooth setting: if $x\in{\cal H}$ is a local minimizer of $h$, then $0\in\partial_L h(x)$. An element 
$x\in\dom h$ fulfilling this inclusion relation is called {\it critical point} of the function $h$. The set of all critical points of $h$ 
is denoted by $\crit(h)$. 

The convergence of Algorithm \eqref{alg-fr-pr-fb2} is stated in the following theorem.

\begin{theorem}\label{conv-th2} In the setting of Problem \ref{pr2}, consider the sequences generated by Algorithm \ref{alg-fr-pr-fb2} such that the additional condition $\liminf_{k\rightarrow+\infty}\eta_k>0$ is satisfied. The following statements hold: 
\begin{enumerate}
 \item[(i)] The sequence $(\theta_k)_{k\geq 1}$ is nonincreasing, hence convergent. Moreover, 
 $$\sum_{k\geq 1}\frac{1}{\eta_k}\|x^k-x^{k-1}\|^2<+\infty.$$
 \item[(ii)] For every (strong) limit point $\ol x$ of $(x^k)_{k \geq 0}$, it holds 
 $\ol x\in\dom f\cap S$ and $\lim_{k\rightarrow+\infty}\theta_k=\frac{f(\ol x)}{g(\ol x)}$. 
 If we additionally have that $\ol x\in \inte(\dom f \cap S)$, then $0\in\partial_L \left(\frac{f+\delta_S}{g}\right)(\ol x)$. 
\end{enumerate}
\end{theorem}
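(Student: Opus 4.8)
The plan is to follow the scheme used in the proof of Theorem \ref{conv-th}, the only structural change being that the convexity of $g$ is now exploited through the subgradient inequality $g(x^k)\geq g(x^{k-1})+\langle\nabla g(x^{k-1}),x^k-x^{k-1}\rangle$, instead of the descent Lemma \ref{desc-l-th} applied to $-g$. First I would record the first-order optimality condition $0\in\partial(f+\delta_S)(x^k)+\tfrac{1}{\eta_k}(x^k-x^{k-1}-\theta_k\eta_k\nabla g(x^{k-1}))$ for $k\geq 1$, equivalently $v_k:=\tfrac{1}{\eta_k}(x^{k-1}-x^k)+\theta_k\nabla g(x^{k-1})\in\partial(f+\delta_S)(x^k)$. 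Testing the resulting subgradient inequality at $x:=x^{k-1}$, using the convexity inequality for $g$ (which gives $\langle\nabla g(x^{k-1}),x^{k-1}-x^k\rangle\geq g(x^{k-1})-g(x^k)$) and the identities $f(x^{k-1})=\theta_k g(x^{k-1})$, $f(x^k)=\theta_{k+1}g(x^k)$, the terms should collapse to
$$(\theta_k-\theta_{k+1})\,g(x^k)\geq\frac{1}{\eta_k}\|x^k-x^{k-1}\|^2\qquad\forall k\geq 1.$$
Since $g(x^k)>0$ and $\theta_k=\tfrac{f(x^{k-1})}{g(x^{k-1})}\geq 0$, this shows $(\theta_k)_{k\geq 1}$ is nonincreasing and bounded from below, hence convergent to some $\ol\theta\geq 0$; summing over $k$ and using $g(x^k)\leq M$ yields $\sum_{k\geq 1}\tfrac{1}{\eta_k}\|x^k-x^{k-1}\|^2\leq M(\theta_1-\ol\theta)<+\infty$. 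This is statement (i).

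For (ii), let $\ol x$ be a strong limit point, say $x^{k_l}\to\ol x$. Since $\liminf_k\eta_k>0$, the sequence $(1/\eta_k)_k$ is bounded, so the $\ell^1$-summability in (i) forces $\tfrac{1}{\eta_k}(x^k-x^{k-1})\to 0$. As $S$ is closed, $\ol x\in S$; and since $g$ is continuous and $\theta_{k_l}\to\ol\theta$, we get $f(x^{k_l})=\theta_{k_l+1}g(x^{k_l})\to\ol\theta g(\ol x)$, whence by lower semicontinuity of $f$ one has $f(\ol x)\leq\ol\theta g(\ol x)<+\infty$, i.e. $\ol x\in\dom f\cap S$. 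For the reverse inequality I would test the subgradient inequality at $x:=\ol x$, obtaining $f(\ol x)\geq f(x^{k_l})+\langle v_{k_l},\ol x-x^{k_l}\rangle$, and let $l\to+\infty$: as $\ol x-x^{k_l}\to 0$, the contribution of $\tfrac{1}{\eta_{k_l}}(x^{k_l-1}-x^{k_l})$ vanishes, and — provided the iterates $x^{k_l-1}$ stay in a region where $\nabla g$ is bounded, so that $\theta_{k_l}\nabla g(x^{k_l-1})$ is bounded (the delicate point; see below) — the whole inner product vanishes and $f(\ol x)\geq\limsup_l f(x^{k_l})=\ol\theta g(\ol x)$. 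Combined with the previous bound, $f(\ol x)=\ol\theta g(\ol x)$, i.e. $\lim_k\theta_k=\tfrac{f(\ol x)}{g(\ol x)}$.

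Assume in addition $\ol x\in\inte(\dom f\cap S)$. Then the proper convex lower semicontinuous function $f+\delta_S$ is locally Lipschitz around $\ol x$, so $\partial(f+\delta_S)$ is locally bounded there; since $x^{k_l}\to\ol x$, the subgradients $v_{k_l}\in\partial(f+\delta_S)(x^{k_l})$ form a bounded sequence. Because $\tfrac{1}{\eta_{k_l}}(x^{k_l-1}-x^{k_l})\to 0$ and $\theta_{k_l}\to\ol\theta$, one deduces — once $x^{k_l-1}\to\ol x$ is established, so that $\nabla g(x^{k_l-1})\to\nabla g(\ol x)$ by continuity — that $v_{k_l}\to\ol\theta\,\nabla g(\ol x)$. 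The closedness of the graph of the convex subdifferential with respect to the weak-norm topology (\cite[Proposition 20.33]{bauschke-book}) then gives $\ol\theta\,\nabla g(\ol x)\in\partial(f+\delta_S)(\ol x)=\partial_L(f+\delta_S)(\ol x)$. Finally, the quotient rule for the limiting subdifferential (see \cite{boris-carte, rock-wets}), valid since $\tfrac1g$ is continuously differentiable and positive at $\ol x$, yields $\partial_L\big(\tfrac{f+\delta_S}{g}\big)(\ol x)=\tfrac{1}{g(\ol x)}\big(\partial_L(f+\delta_S)(\ol x)-\tfrac{f(\ol x)}{g(\ol x)}\nabla g(\ol x)\big)$; since $\tfrac{f(\ol x)}{g(\ol x)}=\ol\theta$ by the previous paragraph, the inclusion $\ol\theta\,\nabla g(\ol x)\in\partial_L(f+\delta_S)(\ol x)$ is precisely $0\in\partial_L\big(\tfrac{f+\delta_S}{g}\big)(\ol x)$.

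The step I expect to be the main obstacle is the passage to the limit in the subdifferential inclusion in (ii): one must extract enough compactness to control the term $\theta_{k_l}\nabla g(x^{k_l-1})$, ideally showing $x^{k_l-1}\to\ol x$ so that $\nabla g$ can be taken through the limit by continuity. This is where the boundedness of $(1/\eta_k)_k$, the $\ell^1$-summability from (i), and — for the critical-point assertion — the interior hypothesis (through local boundedness of the convex subdifferential, which lets one recover a bound on $\theta_{k_l}\nabla g(x^{k_l-1})$ from the bound on $v_{k_l}$) all have to be combined carefully. By contrast, the $\theta_k$-monotonicity computation in (i) and the appeals to lower semicontinuity and to the subdifferential-graph closedness are routine.
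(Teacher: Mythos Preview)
Your proposal is correct and follows essentially the same route as the paper: the optimality condition, the subgradient inequality for $f+\delta_S$ tested at $x=x^{k-1}$ combined with the convexity inequality for $g$ to obtain $(\theta_k-\theta_{k+1})g(x^k)\geq\tfrac{1}{\eta_k}\|x^k-x^{k-1}\|^2$ (the paper's inequality \eqref{ineq9}); then testing at $x=\ol x$ to get $\limsup f(x^k)\leq f(\ol x)$ and hence $\lim f(x^k)=f(\ol x)$ by lower semicontinuity; the graph-closedness of the convex subdifferential to obtain $\theta\nabla g(\ol x)\in\partial(f+\delta_S)(\ol x)$; and finally the product/quotient rule for the limiting subdifferential (\cite[Corollary 1.111(i)]{boris-carte}) to reach $0\in\partial_L\big(\tfrac{f+\delta_S}{g}\big)(\ol x)$.

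One small difference worth noting: the paper's proof opens part (ii) with ``Assume that $x^k\to\ol x$ as $k\to+\infty$'', i.e.\ it treats the whole sequence as convergent to the limit point. Under that reading, $x^{k-1}\to\ol x$ is automatic, $\nabla g(x^{k-1})\to\nabla g(\ol x)$ by continuity, and the term $\theta_k\langle\nabla g(x^{k-1}),\ol x-x^k\rangle$ vanishes without further work. So the ``main obstacle'' you flag---controlling $\nabla g(x^{k_l-1})$ along a subsequence---is not addressed in the paper; it is simply dissolved by interpreting ``limit point'' as the limit of the full sequence. Your observation that with only $\liminf_k\eta_k>0$ one cannot immediately infer $\|x^k-x^{k-1}\|\to 0$ (and hence $x^{k_l-1}\to\ol x$) along a subsequence is accurate and more careful than the paper's own proof on this point; the paper does not supply an argument for it.
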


\begin{proof} As already seen in the proof of Theorem \ref{conv-th}, we have
\begin{equation}\label{opt-cond-pr2}\frac{1}{\eta_k}(x^{k-1}-x^k)+\theta_k\nabla g(x^{k-1})\in\partial (f+\delta_S)(x^k) \ \forall k\geq 1,\end{equation}
and \begin{align}\label{f-conv2} f(x)-f(x^k)\geq & \ \frac{1}{2\eta_k}\left(\|x^{k-1}-x^k\|^2+\|x-x^k\|^2-\|x-x^{k-1}\|^2\right)\nonumber\\
& \ +\theta_k\langle \nabla g(x^{k-1}),x-x^k\rangle \ \forall x\in S \ \forall k\geq 1. 
\end{align}
By choosing $x:=x^{k-1}$ in \eqref{f-conv2} we obtain 
$$f(x^{k-1})-f(x^k)\geq \frac{1}{\eta_k}\|x^{k-1}-x^k\|^2+\theta_k\langle \nabla g(x^{k-1}),x^{k-1}-x^k\rangle \ \forall k\geq 1.$$
Further, by combining this with 
$$\theta_k\Big(g(x^k)-g(x^{k-1})\Big)\geq \theta_k\langle \nabla g(x^{k-1}),x^k-x^{k-1}\rangle,$$
we obtain \begin{align}0=f(x^{k-1})-\theta_kg(x^{k-1}) \geq & \ f(x^k)-\theta_k g(x^k) + \frac{1}{\eta_k}\|x^{k-1}-x^k\|^2\nonumber\\
                        = & \ (\theta_{k+1}-\theta_k)g(x^k)+\frac{1}{\eta_k}\|x^{k-1}-x^k\|^2 \ \forall k\geq 1\label{ineq9}.
          \end{align}
(i) From \eqref{ineq9} we obtain that $(\theta_k)_{k\geq 1}$ is nonincreasing, hence convergent, since it is bounded from below by $0$. 
Moreover, from \eqref{ineq9} we obtain $$\frac{1}{\eta_k}\|x^{k-1}-x^k\|^2\leq (\theta_k-\theta_{k+1})M \ \forall k\geq 1,$$
hence $\sum_{k\geq 1}\frac{1}{\eta_k}\|x^k-x^{k-1}\|^2<+\infty.$

(ii) Assume that $x^{k} \rightarrow \ol x$ as $k \rightarrow +\infty$. Since $S$ is closed, we have $\ol x\in S$. By choosing  $x:=\ol x$ in \eqref{f-conv2}, we obtain  
\begin{align*} f(\ol x)-f(x^{k})\geq & \ \frac{1}{2\eta_{k}}\left(\|x^{{k}-1}-x^{k}\|^2+\|\ol x-x^{k}\|^2-\|\ol x-x^{{k}-1}\|^2\right)\nonumber\\
& \ +\theta_{k}\langle \nabla g(x^{{k}-1}),\ol x-x^{k}\rangle  \ \forall k\geq 1. 
\end{align*}
By using (i), one can see that the right-hand side of the above inequality converges to $0$ as $k\rightarrow+\infty$. Hence, 
$\limsup_{k\rightarrow+\infty}f(x^k)\leq f(\ol x)$. Since $f$ is lower semicontinuous, the reverse inequality is also true, thus 
$$\lim_{k\rightarrow+\infty}f(x^k) = f(\ol x).$$ Furthermore, due to the continuity of $g$, we have 
$$\lim_{k\rightarrow+\infty}g(x^k) = g(\ol x).$$ Let us denote by $\theta$ the limit of the sequence $(\theta_k)_{k\geq 1}$. Passing 
to the limit as $k\rightarrow+\infty$ in the relation which defines $\theta_{k+1}$ in Algorithm \ref{alg-fr-pr-fb2}, we obtain
\begin{equation}\label{theta-f-g}\theta=\frac{f(\ol x)}{g(\ol x)}.\end{equation}
By using again the closedness property of the graph of the convex subdifferential, from \eqref{opt-cond-pr2} and (i) we obtain
\begin{equation}\label{opt-cond-sec2}\theta\nabla g(\ol x)\in\partial (f+\delta_S)(\ol x),\end{equation}
hence $\ol x\in \dom f\cap S$. 

Assume now that $\ol x\in \inte(\dom f \cap S)$. In this situation $f+\delta_S$ is Lipschitz continuous around $\ol x$ 
(see \cite[Theorem 8.29]{bauschke-book}). From \eqref{theta-f-g} and \eqref{opt-cond-sec2} we obtain
\begin{align*} 0& \ \in -\frac{(f+\delta_S)(\ol x)}{g^2(\ol x)}\nabla g(\ol x)+\frac{1}{g(\ol x)}\partial (f+\delta_S)(\ol x)\\
 & \ = (f+\delta_S)(\ol x)\nabla \left(\frac{1}{g}\right)(\ol x)+\partial \left(\frac{1}{g(\ol x)}(f+\delta_S)\right)(\ol x)\\
 & \ = (f+\delta_S)(\ol x)\nabla \left(\frac{1}{g}\right)(\ol x)+\partial_L \left(\frac{1}{g(\ol x)}(f+\delta_S)\right)(\ol x)\\
 & \ = \partial_L \left(\frac{1}{g}\cdot(f+\delta_S)\right)(\ol x),
\end{align*}
where the last equality makes use of \cite[Corollary 1.111(i)]{boris-carte}. 
\end{proof}

\begin{remark} (a) The main ingredient in the proof of the second statement of the above theorem is the  rule for the limiting subdifferential of the product (or quotient) of locally Lipschitz continuous functions. We notice that similar rules are valid also 
for the Clarke subdifferential (see \cite[Exercise 10.21]{clarke}). 

(b) Whenever $\frac{f+\delta_S}{g}$ is a convex function, we obtain in the hypotheses of the above theorem  that $\ol x$ is a global optimal solution of \eqref{fr-pr2} and 
$\lim_{k\rightarrow+\infty}\theta_k=\frac{f(\ol x)}{g(\ol x)}=\inf_{x\in S}\frac{f(x)}{g(x)}$. 
\end{remark}

In the remaining of this subsection we address the question whether one can guarantee the convergence of the whole sequence 
$(x^k)_{k\geq 0}$ generated in Algorithm \ref{alg-fr-pr-fb2}. We will see that this is ensured whenever the objective function of \eqref{fr-pr2} 
satisfies the \textit{Kurdyka-\L{}ojasiewicz property}. To this end we recall some notations and definitions related to the latter. 

For the remaining of this section we suppose that ${\cal H}$ is finite dimensional. For $\eta\in(0,+\infty]$, we denote by $\Theta_{\eta}$ the class of concave and continuous functions 
$\varphi:[0,\eta)\rightarrow [0,+\infty)$ such that $\varphi(0)=0$, $\varphi$ is continuously differentiable on $(0,\eta)$, continuous at $0$ and $\varphi'(s)>0$ for all 
$s\in(0, \eta)$. In the following definition (see \cite{att-b-red-soub2010, b-sab-teb}) we use also the {\it distance function} to a set, 
defined for $A\subseteq{\cal H}$ as $\dist(x,A)=\inf_{y\in A}\|x-y\|$ for all $x\in{\cal H}$. 

\begin{definition}\label{KL-property} \rm({\it Kurdyka-\L{}ojasiewicz property}) Let $h:{\cal H}\rightarrow\B$ be a proper and lower 
semicontinuous function. We say that $h$ satisfies the {\it Kurdyka-\L{}ojasiewicz (KL) property} at 
$\ol x\in \dom\partial_L h=\{x\in{\cal H}:\partial_L h(x)\neq\emptyset\}$ 
if there exists $\eta \in(0,+\infty]$, a neighborhood $U$ of $\ol x$ and a function $\varphi\in \Theta_{\eta}$ such that for all $x$ in the 
intersection 
$$U\cap \{x\in{\cal H}: h(\ol x)<h(x)<h(\ol x)+\eta\}$$ the following inequality holds 
$$\varphi'(h(x)-h(\ol x))\dist(0,\partial_L h(x))\geq 1.$$
If $h$ satisfies the KL property at each point in $\dom\partial h$, then $h$ is called a {\it KL function}. 
\end{definition}

The origins of this notion go back to the pioneering work of \L{}ojasiewicz \cite{lojasiewicz1963}, where it is proved that for a real-analytic function 
$h:{\cal H}\rightarrow\R$ and a critical point $\ol x\in{\cal H}$ (that is $\nabla h(\ol x)=0$), there exists $\theta\in[1/2,1)$ such that the function 
$|h-h(\ol x)|^{\theta}\|\nabla h\|^{-1}$ is bounded around $\ol x$. This corresponds to the situation when $\varphi(s)=Cs^{1-\theta}$, where 
$C>0$. The result of 
\L{}ojasiewicz allows the interpretation of the KL property as a re-parametrization of the function values in order to avoid flatness around the 
critical points. Kurdyka \cite{kurdyka1998} extended this property to differentiable functions definable in an o-minimal structure. 
Further extensions to the nonsmooth setting can be found in \cite{b-d-l2006, att-b-red-soub2010, b-d-l-s2007, b-d-l-m2010}. 

One of the remarkable properties of KL functions is their ubiquity in applications, according to \cite{b-sab-teb}. To this class of functions belong semi-algebraic, real sub-analytic, semiconvex, uniformly convex and 
convex functions satisfying a growth condition. We refer the reader to 
\cite{b-d-l2006, att-b-red-soub2010, b-d-l-m2010, b-sab-teb, b-d-l-s2007, att-b-sv2013, attouch-bolte2009} and the references therein  for more details regarding KL functions and illustrating examples. 

An important role in our convergence analysis will be played by the following uniformized KL property given in \cite[Lemma 6]{b-sab-teb}. 

\begin{lemma}\label{unif-KL-property} Let $\Omega\subseteq {\cal H}$ be a compact and connected set and let $h:{\cal H}\rightarrow\B$ be a proper 
and lower semicontinuous function. Assume that $h$ is constant on $\Omega$ and $h$ satisfies the KL property at each point of $\Omega$.   
Then there exist $\varepsilon,\eta >0$ and $\varphi\in \Theta_{\eta}$ such that for all $\ol x\in\Omega$ and for all $x$ in the intersection 
\begin{equation}\label{int} \{x\in{\cal H}: \dist(x,\Omega)<\varepsilon\}\cap \{x\in{\cal H}: h(\ol x)<h(x)<h(\ol x)+\eta\}\end{equation} 
the following inequality holds \begin{equation}\label{KL-ineq}\varphi'(h(x)-h(\ol x))\dist(0,\partial_L h(x))\geq 1.\end{equation}
\end{lemma}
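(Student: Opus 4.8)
The plan is to use the compactness of $\Omega$ to pass from the \emph{pointwise} KL property to finitely many pieces of local data, and then to amalgamate these pieces into a single radius $\varepsilon$, a single threshold $\eta$, and a single desingularizing function $\varphi$. A preliminary observation that makes everything work: since $h$ is constant on $\Omega$, writing $h^\ast$ for its common value there, one has for every $\bar x\in\Omega$ and every $\eta>0$ the identity $\{x\in{\cal H}: h(\bar x)<h(x)<h(\bar x)+\eta\}=\{x\in{\cal H}: h^\ast<h(x)<h^\ast+\eta\}$, so the second set appearing in \eqref{int} does not depend on $\bar x$.

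\textbf{Step 1 (localize and cover).} For each $u\in\Omega$ the function $h$ satisfies the KL property at $u$ (in particular $u\in\dom\partial_L h$), so there exist $\eta_u\in(0,+\infty]$, an open neighbourhood $U_u$ of $u$, and $\varphi_u\in\Theta_{\eta_u}$ such that $\varphi_u'(h(x)-h^\ast)\dist(0,\partial_L h(x))\geq 1$ for all $x\in U_u\cap\{h^\ast<h(x)<h^\ast+\eta_u\}$. The family $(U_u)_{u\in\Omega}$ is an open cover of the compact set $\Omega$, so I would extract $u_1,\dots,u_p\in\Omega$ with $\Omega\subseteq\bigcup_{i=1}^pU_{u_i}$. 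The set $K:={\cal H}\setminus\bigcup_{i=1}^pU_{u_i}$ is closed and disjoint from the compact set $\Omega$, hence $\varepsilon:=\dist(\Omega,K)>0$ (take $\varepsilon:=1$ if $K=\emptyset$), and this gives the inclusion $\{x\in{\cal H}:\dist(x,\Omega)<\varepsilon\}\subseteq\bigcup_{i=1}^pU_{u_i}$.

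\textbf{Step 2 (merge the data and conclude).} Set $\eta:=\min_{1\leq i\leq p}\eta_{u_i}>0$ and $\varphi:=\sum_{i=1}^p\varphi_{u_i}|_{[0,\eta)}$. As a finite sum of (restrictions to $[0,\eta)$ of) functions lying in the classes $\Theta_{\eta_{u_i}}$, the function $\varphi$ is concave and continuous on $[0,\eta)$, continuously differentiable on $(0,\eta)$, satisfies $\varphi(0)=0$, and has $\varphi'(s)=\sum_{i=1}^p\varphi_{u_i}'(s)>0$ for all $s\in(0,\eta)$; therefore $\varphi\in\Theta_\eta$. Now fix $\bar x\in\Omega$ and $x$ in the intersection \eqref{int} formed with this $\varepsilon$ and $\eta$. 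From $\dist(x,\Omega)<\varepsilon$ one gets an index $j$ with $x\in U_{u_j}$, and since $h^\ast<h(x)<h^\ast+\eta\leq h^\ast+\eta_{u_j}$, the KL inequality at $u_j$ yields $\varphi_{u_j}'(h(x)-h^\ast)\dist(0,\partial_L h(x))\geq 1$ (this also holds trivially when $\partial_L h(x)=\emptyset$, where the distance is $+\infty$). Because $\varphi'(h(x)-h^\ast)\geq\varphi_{u_j}'(h(x)-h^\ast)>0$, multiplying through by $\dist(0,\partial_L h(x))$ gives $\varphi'(h(x)-h^\ast)\dist(0,\partial_L h(x))\geq 1$, which is exactly \eqref{KL-ineq} (recall $h(\bar x)=h^\ast$).

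The genuinely substantive points are the bookkeeping ones: verifying that the amalgamated $\varphi$ still belongs to $\Theta_\eta$ — here choosing the \emph{sum} $\sum_i\varphi_{u_i}$ rather than, say, a pointwise maximum keeps both concavity and the strict positivity of $\varphi'$ transparent, while the finite sum is integrable/continuous at $0$ since each summand is — and extracting the uniform tube radius $\varepsilon$ from the finite subcover, which is precisely where the compactness of $\Omega$ is used (together with the constancy of $h$ on $\Omega$, which removes the dependence of the sublevel slab on $\bar x$). Connectedness of $\Omega$ is not needed for this lemma.
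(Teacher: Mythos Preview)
The paper does not give its own proof of this lemma; it merely quotes the result from \cite[Lemma 6]{b-sab-teb}. Your argument is correct and is precisely the standard proof found in that reference: use compactness of $\Omega$ to extract a finite subcover from the local KL data, take $\eta$ as the minimum of the local thresholds, set $\varphi$ equal to the \emph{sum} of the local desingularizing functions (which preserves concavity, $C^1$-regularity on $(0,\eta)$, and strict positivity of the derivative), and obtain the uniform radius $\varepsilon$ as the distance from $\Omega$ to the closed complement of the finite union. Your remark that connectedness of $\Omega$ plays no role in the proof is also correct; only compactness is used.
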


The techniques used below are well-known in the community dealing with algorithms for optimization problems involving functions with the 
Kurdyka-\L{}ojasiewicz property (see \cite{b-sab-teb, bcl, att-b-sv2013, c-pesquet-r}). We show that this approach can be used also 
for fractional programming problems. 

In the following we denote by $\omega((x^k)_{k\geq 0})$ the set of cluster points of the sequence $(x^k)_{k\geq 0}$. The first statement 
in the next result is a direct consequence of Theorem \ref{conv-th2}, while the other statements can be proved similar to 
\cite[Lemma 5]{b-sab-teb}, where it is noticed that (b) and (c) are generic for sequences satisfying the relation 
$\lim_{k\rightarrow+\infty}(x^k-x^{k-1})=0$. 

\begin{lemma}\label{l7} In the setting of Problem \ref{pr2}, let ${\cal H}$ be finite dimensional and consider the sequences generated 
by Algorithm \ref{alg-fr-pr-fb2} such that the additional condition 
$$0<\liminf_{k\rightarrow+\infty}\eta_k\leq \limsup_{k\rightarrow+\infty}\eta_k<+\infty$$ is satisfied. Assume that $(x^k)_{k\geq 0}$ is 
bounded. The following statements hold: 
\begin{itemize}
\item[(a)] $\omega((x^k)_{k\geq 0})\cap\inte(\dom f\cap S) \subseteq \crit\left(\frac{f+\delta_S}{g}\right)$; 
\item[(b)] $\lim_{k\to\infty}\dist\Big(x^k,\omega((x^k)_{k\geq 0})\Big)=0$;
\item[(c)] $\omega((x^k)_{k\geq 0})$ is nonempty, compact and connected;
\item[(d)] $\frac{f+\delta_S}{g}$ is finite and constant on $\omega((x^k)_{k\geq 0})$.
\end{itemize}
\end{lemma}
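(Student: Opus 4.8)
The plan is to dispatch the four statements in order, drawing on Theorem \ref{conv-th2} for the first and on the fact that, under the stated bounds on $(\eta_k)_{k\geq 0}$, part (i) of that theorem forces $\sum_{k\geq 1}\|x^k-x^{k-1}\|^2<+\infty$ and hence $\lim_{k\to+\infty}(x^k-x^{k-1})=0$. Statement (a) is immediate: if $\ol x\in\omega((x^k)_{k\geq 0})\cap\inte(\dom f\cap S)$, pick a subsequence $x^{k_l}\to\ol x$; Theorem \ref{conv-th2}(ii) applied to this subsequence gives $0\in\partial_L\bigl(\frac{f+\delta_S}{g}\bigr)(\ol x)$, which is precisely $\ol x\in\crit\bigl(\frac{f+\delta_S}{g}\bigr)$.

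For (b) and (c), I would invoke the generic argument for sequences with asymptotically vanishing increments, exactly as in \cite[Lemma 5]{b-sab-teb}. For (b): if $\dist(x^k,\omega((x^k)_{k\geq 0}))\not\to 0$, there are $\e>0$ and a subsequence $(x^{k_l})$ with $\dist(x^{k_l},\omega)\geq\e$ for all $l$; boundedness of $(x^k)_{k\geq 0}$ lets us extract a convergent sub-subsequence whose limit lies in $\omega((x^k)_{k\geq 0})$ by definition, contradicting $\dist\geq\e$. For (c): $\omega((x^k)_{k\geq 0})$ is nonempty because $(x^k)_{k\geq 0}$ is bounded in the finite-dimensional space $\cal H$; it is closed and bounded, hence compact, as a set of cluster points of a bounded sequence; and it is connected because $\|x^{k+1}-x^k\|\to 0$ — a standard fact, proved by noting that if $\omega$ were a disjoint union of two nonempty compact sets $\Omega_1,\Omega_2$ at positive distance, the increments being eventually smaller than that distance would trap the tail of $(x^k)$ near one of them, contradicting that both contain cluster points.

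Statement (d) is where the fractional structure must be handled with care, and I expect it to be the main obstacle. The natural route is to show $\frac{f+\delta_S}{g}$ is constant on $\omega((x^k)_{k\geq 0})$ with value equal to $\theta:=\lim_{k\to+\infty}\theta_k$. Take any $\ol x\in\omega((x^k)_{k\geq 0})$ and a subsequence $x^{k_l}\to\ol x$. By Theorem \ref{conv-th2}(ii) we have $\ol x\in\dom f\cap S$ and $\frac{f(\ol x)}{g(\ol x)}=\theta$, so $\bigl(\frac{f+\delta_S}{g}\bigr)(\ol x)=\theta$ for every such $\ol x$; this gives both finiteness and constancy in one stroke, since $\theta$ does not depend on the chosen cluster point. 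The delicate point is that the proof of Theorem \ref{conv-th2}(ii) as written assumes strong convergence $x^k\to\ol x$ of the whole sequence, whereas here we only have a subsequence; I would check that the argument there — which only uses inequality \eqref{f-conv2} along the subsequence, lower semicontinuity of $f$, continuity of $g$, and $\|x^{k_l-1}-x^{k_l}\|\to 0$ (which follows from part (i)) — goes through verbatim with $k$ replaced by $k_l$, so that $f(x^{k_l})\to f(\ol x)$ and $g(x^{k_l})\to g(\ol x)$ and hence $\theta=\frac{f(\ol x)}{g(\ol x)}$. Passing this through the definition of $\frac{f+\delta_S}{g}$ on $S$ yields (d).
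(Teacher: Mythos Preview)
Your proposal is correct and follows exactly the approach sketched in the paper: part (a) is read off from Theorem \ref{conv-th2}(ii), parts (b) and (c) are the generic facts for bounded sequences with $\|x^{k}-x^{k-1}\|\to 0$ (cf.\ \cite[Lemma 5]{b-sab-teb}), and part (d) again reduces to Theorem \ref{conv-th2}(ii) applied along each convergent subsequence. Your observation that the proof of Theorem \ref{conv-th2}(ii) goes through verbatim when $x^k\to\ol x$ is replaced by $x^{k_l}\to\ol x$ is exactly the right check to make, and it does.
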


\begin{remark} Suppose that $\frac{f+\delta_S}{g}$ is coercive, that is 
$$\lim_{\|u\|\rightarrow+\infty}\left(\frac{f+\delta_S}{g}\right)(u)=+\infty.$$  Then the sequence $(x^k)_{k\geq 0}$ generated by Algorithm \ref{alg-fr-pr-fb2} is bounded. Indeed, this follows from the fact that 
$(\theta_k)_{k\geq 1}$ is nonincreasing and the lower level sets of $\frac{f+\delta_S}{g}$ are bounded. 
\end{remark}

We give now the main result concerning the convergence of the whole sequence $(x^k)_{k\geq 0}$. 

\begin{theorem}\label{t1} In the setting of Problem \ref{pr2}, let ${\cal H}$ be finite dimensional, $\nabla g$ be $L$-Lipschitz continuous, 
and consider the sequences generated by Algorithm \ref{alg-fr-pr-fb2} under the additional conditions  
$\liminf_{k\rightarrow+\infty}\eta_k>0$, $\eta_1\theta_1<\frac{1}{L}$ and $(\eta_k)_{k\geq 1}$ nonincreasing. Assume that  
$\frac{f+\delta_S}{g}$ is a KL function. Moreover, suppose that $(x^k)_{k\geq 0}$ is 
bounded and there exists $k_0\geq 0$ such that 
$x^k\in\inte(\dom f\cap S)$ for all $k\geq k_0$. Then the following statements are true:\begin{itemize}
 \item[(a)] $\sum_{k\geq 0}\|x^{k+1}-x^k\|<+\infty$;
 \item[(b)] there exists $ x_{\infty} \in\dom f \cap S$ such that $\lim_{k\rightarrow+\infty}x^k=x_{\infty}$. If additionally 
 $x_{\infty} \in\inte(\dom f\cap S)$, then $0\in\partial_L \left(\frac{f+\delta_S}{g}\right)( x_{\infty})$.
\end{itemize}
\end{theorem}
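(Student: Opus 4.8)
The plan is to follow the standard Kurdyka-\L{}ojasiewicz recipe as developed in \cite{b-sab-teb}, adapted to the fractional objective $H:=\frac{f+\delta_S}{g}$. First I would fix $k_0$ so that $x^k\in\inte(\dom f\cap S)$ for all $k\geq k_0$, restrict attention to the tail of the sequence, and invoke Lemma~\ref{l7}: the limit point set $\Omega:=\omega((x^k)_{k\geq 0})$ is nonempty, compact, connected, contained in $\inte(\dom f\cap S)\cap\crit(H)$ (here one uses that the whole sequence eventually lies in the interior, so all cluster points do too), and $H$ is finite and constant on $\Omega$, say with value $H^*$. By Theorem~\ref{conv-th2}(i) the sequence $(\theta_k)$ is nonincreasing and converges; combined with $\theta_k=H(x^k)$ along a subsequence and $H\equiv H^*$ on $\Omega$, the full sequence $(\theta_k)$ decreases to $H^*$, so $H(x^k)=\theta_{k+1}\downarrow H^*$. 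Then Lemma~\ref{unif-KL-property} applied to $\Omega$ gives $\varepsilon,\eta>0$ and $\varphi\in\Theta_\eta$ so that the KL inequality $\varphi'(H(x)-H^*)\,\dist(0,\partial_L H(x))\geq 1$ holds on the intersection \eqref{int}; by Lemma~\ref{l7}(b) and convergence of $\theta_k$, all iterates $x^k$ with $k$ large enough lie in that intersection (dealing separately with the trivial case $H(x^k)=H^*$ for some $k$, where one shows the tail is eventually constant).

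The two quantitative ingredients feeding the KL argument are a \emph{sufficient-decrease} estimate and a \emph{subgradient-bound} estimate. For the first, inequality \eqref{ineq9} already yields $(\theta_k-\theta_{k+1})M\geq\frac{1}{\eta_k}\|x^k-x^{k-1}\|^2$, i.e. $H(x^{k-1})-H(x^k)\geq\frac{1}{M\eta_k}\|x^k-x^{k-1}\|^2\geq c\,\|x^k-x^{k-1}\|^2$ for a constant $c>0$, using $\limsup\eta_k<+\infty$ (available since $(\eta_k)$ is nonincreasing and positive-liminf). For the second, I would produce an element of $\partial_L H(x^k)$ of norm $O(\|x^k-x^{k-1}\|)$: from \eqref{opt-cond-pr2} we have $\frac{1}{\eta_k}(x^{k-1}-x^k)+\theta_k\nabla g(x^{k-1})\in\partial(f+\delta_S)(x^k)$, and then—exactly as in the interior computation at the end of the proof of Theorem~\ref{conv-th2}, using the product/quotient rule \cite[Corollary 1.111(i)]{boris-carte} since $f+\delta_S$ is locally Lipschitz at $x^k$—one gets $\frac{1}{g(x^k)}\big[\frac{1}{\eta_k}(x^{k-1}-x^k)+\theta_k\nabla g(x^{k-1})\big]-\frac{f+\delta_S)(x^k)}{g^2(x^k)}\nabla g(x^k)\in\partial_L H(x^k)$. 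Estimating this: the first term is $O(\|x^k-x^{k-1}\|)$ by $\liminf\eta_k>0$ and $0<g\leq M$; the remaining terms combine as $\frac{\theta_k}{g(x^k)}\nabla g(x^{k-1})-\frac{\theta_{k+1}}{g(x^k)}\nabla g(x^k)$, which is controlled by $|\theta_k-\theta_{k+1}|\,\|\nabla g(x^{k-1})\|+\theta_{k+1}\|\nabla g(x^{k-1})-\nabla g(x^k)\|$; $L$-Lipschitzness of $\nabla g$, boundedness of $(x^k)$ (hence of $\nabla g(x^k)$), and the bound $|\theta_k-\theta_{k+1}|\leq\frac{1}{M\eta_k}\|x^k-x^{k-1}\|^2\leq c'\|x^k-x^{k-1}\|$ (the last step using boundedness of $(x^k)$ to bound $\|x^k-x^{k-1}\|$) together give $\dist(0,\partial_L H(x^k))\leq b\,\|x^k-x^{k-1}\|$ for some $b>0$ and all large $k$.

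With these two estimates in hand, the finite-length conclusion (a) is the now-classical telescoping argument: writing $\Delta_k:=\|x^k-x^{k-1}\|$ and $\phi_k:=\varphi(H(x^k)-H^*)$, concavity of $\varphi$ gives $\phi_k-\phi_{k+1}\geq\varphi'(H(x^k)-H^*)\big(H(x^k)-H(x^{k+1})\big)\geq\frac{c\,\Delta_{k+1}^2}{\dist(0,\partial_L H(x^k))}\geq\frac{c}{b}\cdot\frac{\Delta_{k+1}^2}{\Delta_k}$, whence $\Delta_{k+1}\leq\sqrt{\frac{b}{c}(\phi_k-\phi_{k+1})\Delta_k}\leq\frac{1}{2}\Delta_k+\frac{b}{2c}(\phi_k-\phi_{k+1})$ by AM-GM. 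Summing over $k\geq k_1$ (for suitable $k_1\geq k_0$) and using that $(\phi_k)$ is monotone and bounded (it converges since $H(x^k)\downarrow H^*$ and $\varphi$ is continuous with $\varphi(0)=0$), the series $\sum_k\Delta_k$ converges; this proves (a). Then $(x^k)$ is Cauchy in the finite-dimensional space $\cal H$, hence converges to some $x_\infty\in\cal H$; since $S$ is closed and each $x^k\in S$, we have $x_\infty\in S$, and by Theorem~\ref{conv-th2}(ii), $x_\infty\in\dom f\cap S$ with $\theta_k\to\frac{f(x_\infty)}{g(x_\infty)}$, and if moreover $x_\infty\in\inte(\dom f\cap S)$ then $0\in\partial_L H(x_\infty)$ again by Theorem~\ref{conv-th2}(ii). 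This gives (b).

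The main obstacle I anticipate is the subgradient bound: unlike the prototypical proximal-gradient setting where the objective is a plain sum, here the quotient structure forces one to apply the limiting-subdifferential product/quotient calculus rule and to track the extra error terms coming from $\theta_k\neq\theta_{k+1}$ and from $\nabla g$ being evaluated at $x^{k-1}$ rather than $x^k$. Showing that $|\theta_k-\theta_{k+1}|=O(\Delta_k)$ (rather than merely $o(1)$) is what makes the telescoping work, and this in turn is where the hypotheses $\liminf\eta_k>0$, boundedness of $(x^k)$, and $0<g\leq M$ are all genuinely used; the extra structural assumptions $\eta_1\theta_1<\frac{1}{L}$ and $(\eta_k)$ nonincreasing ensure the estimates stay uniform in $k$ and, via the descent lemma, keep the relevant constants positive. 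Verifying that the iterates enter and remain in the uniformized-KL neighborhood \eqref{int}—handling the degenerate case $H(x^k)=H^*$ separately—is routine but must be done carefully.
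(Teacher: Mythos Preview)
Your overall strategy matches the paper's: identify $\Omega=\omega((x^k))$, apply the uniformized KL lemma, combine a sufficient-decrease estimate (from \eqref{ineq9}) with a subgradient bound built out of \eqref{opt-cond-pr2} and the product/quotient rule, and run the concavity-plus-AM-GM telescoping. However, one step in your subgradient bound is wrong and this is exactly the place where the paper's argument diverges from the ``textbook'' KL recipe.

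You write $|\theta_k-\theta_{k+1}|\leq\frac{1}{M\eta_k}\|x^k-x^{k-1}\|^2$, but \eqref{ineq9} gives $(\theta_k-\theta_{k+1})g(x^k)\geq\frac{1}{\eta_k}\|x^k-x^{k-1}\|^2$, hence (using $g\leq M$) the \emph{reverse} inequality $\theta_k-\theta_{k+1}\geq\frac{1}{M\eta_k}\|x^k-x^{k-1}\|^2$. This is your sufficient-decrease estimate, not an upper bound, so it cannot be reused to control the subgradient. Consequently the clean bound $\dist(0,\partial_L H(x^k))\leq b\,\|x^k-x^{k-1}\|$ is not available, and the contraction coefficient $\tfrac12$ you obtain from AM-GM is unjustified. (A Lipschitz argument for $H$ would salvage the upper bound only if the cluster points lie in $\inte(\dom f\cap S)$; your claim to that effect is also unjustified, since interior points can accumulate on the boundary.)

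The paper deals with this by \emph{not} absorbing $(\theta_k-\theta_{k+1})$ into $\|x^k-x^{k-1}\|$. It keeps the subgradient estimate in the form
\[
\|x_k^*\|\leq \frac{1}{g(x^k)}\Big[\Big(\tfrac{1}{\eta_k}+\theta_k L\Big)\|x^k-x^{k-1}\|+(\theta_k-\theta_{k+1})\|\nabla g(x^k)\|\Big],
\]
carries the second term through the AM-GM step as a separate summable error, and arrives at
\[
\|x^{k+1}-x^k\|\leq \frac{\eta_{k+1}}{2}\Big(\tfrac{1}{\eta_k}+\theta_k L\Big)\|x^k-x^{k-1}\|+C_1(\theta_k-\theta_{k+1})+C_2\varepsilon_k.
\]
Here the contraction factor is $\frac{\eta_{k+1}}{2}(\tfrac{1}{\eta_k}+\theta_k L)\leq\frac{1}{2}(1+\eta_k\theta_k L)\leq\frac{1}{2}(1+\eta_1\theta_1 L)<1$, which is precisely where the hypotheses $(\eta_k)$ nonincreasing and $\eta_1\theta_1<\tfrac{1}{L}$ enter; summability then follows from Lemma~\ref{fejer2}. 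In short: your plan identifies the right ingredients, but you must treat $(\theta_k-\theta_{k+1})$ as an additional $\ell^1$ perturbation rather than fold it into the step-size term, and then the two extra structural assumptions are what make the resulting recursion contractive.
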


\begin{proof} (a) Consider the sequences generated by Algorithm \ref{alg-fr-pr-fb2}. 
According to Lemma \ref{l7} we can choose an element $\ol x \in \omega((x^k)_{k\geq 0})$. 
By Theorem \ref{conv-th2}(ii), we have $\ol x\in\dom f \cap S$ and $\lim_{k\rightarrow +\infty}\theta_k=\frac{f(\ol x)}{g(\ol x)}$. 
We separately treat the following two cases. 

I. There exists $\ol k\geq 1$ such that $\theta_{\ol k}=\frac{f(\ol x)}{g(\ol x)}$. Since $(\theta_k)_{k\geq 1}$ is nonincreasing, we have 
$\theta_{k}=\frac{f(\ol x)}{g(\ol x)}$ for every $k\geq \ol k$. By using \eqref{ineq9}, we deduce that the sequence 
$(x^k)_{k\geq \ol k}$ is constant. From here the conclusion follows automatically. 

II. For all $k\geq 1$ it holds $\theta_k>\frac{f(\ol x)}{g(\ol x)}$. Take $\Omega:=\omega ((x^k)_{k\geq 0})$.

In virtue of Lemma \ref{l7}(c) and (d) and Lemma \ref{unif-KL-property}, the KL property of $\frac{f+\delta_S}{g}$ leads to the existence of positive numbers $\e$ and $\eta$ and 
a concave function $\varphi\in\Phi_{\eta}$ such that for all
\begin{equation}\label{int-H} 
x\in  \{u\in {\cal H}: \dist(u,\Omega)<\e\}
  \cap\left\{u\in\dom f\cap S:\frac{f(\ol x)}{g(\ol x)}<\frac{f(u)}{g(u)}<\frac{f(\ol x)}{g(\ol x)}+\eta\right\}\end{equation}
one has
\begin{equation}\label{ineq-H}\varphi'\left(\frac{f(x)}{g(x)}-\frac{f(\ol x)}{g(\ol x)}\right)\dist(0,\partial_L f(x))\ge 1.\end{equation}

Let $k_1\geq 0$ be such that $\theta_k<\frac{f(\ol x)}{g(\ol x)}+\eta$ for all $k\geq k_1.$  
According to Lemma \ref{l7}(b), there exists $k_2\geq 0$ such that $\dist(x^k,\Omega)<\e$ for all $k\geq k_2.$

Hence the sequence $(x^k)_{k\geq \ol k}$, where $\ol k=\max\{k_1,k_2\}$, belongs to the intersection \eqref{int-H}. So we have (see \eqref{ineq-H})
\begin{equation}\label{ineq-H'}\varphi'\left(\frac{f(x^k)}{g(x^k)}-\frac{f(\ol x)}{g(\ol x)}\right)\|x^*\|\ge 1
\ \forall x^*\in\partial_L\left(\frac{f+\delta_S}{g}\right)(x^k) \ \forall k\geq\ol k.\end{equation}
Since $\varphi$ is concave, it holds for all $x^*\in\partial_L\left(\frac{f+\delta_S}{g}\right)(x^k)$ and for all $k\geq \ol k$
\begin{align}
\e_k & := \varphi\left(\frac{f(x^k)}{g(x^k)}-\frac{f(\ol x)}{g(\ol x)}\right)-\varphi\left(\frac{f(x^{k+1})}{g(x^{k+1})}-\frac{f(\ol x)}{g(\ol x)}\right)\\ 
& \ge \varphi'\left(\frac{f(x^k)}{g(x^k)}-\frac{f(\ol x)}{g(\ol x)}\right)\cdot\left(\frac{f(x^k)}{g(x^k)}-\frac{f(x^{k+1})}{g(x^{k+1})}\right)\nonumber\\ 
 & = \varphi'\left(\frac{f(x^k)}{g(x^k)}-\frac{f(\ol x)}{g(\ol x)}\right)\cdot\left(\theta_{k+1}-\theta_{k+2}\right)\nonumber\\
 & \ge \frac{1}{\|x^*\|}\cdot\left(\theta_{k+1}-\theta_{k+2}\right)\nonumber\\
 & \geq \frac{1}{\|x^*\|}\cdot\frac{1}{\eta_{k+1}g(x^{k+1})}\|x^{k+1}-x^k\|^2,\label{ineq10}
\end{align}
where the last inequality follows from \eqref{ineq9}. 

Further, by using \eqref{opt-cond-pr2} and \cite[Corollary 1.111(i)]{boris-carte}, we have that for every $k \geq k_0$
$$x_k^*:=-\frac{\nabla g(x^k)}{g^2(x^k)}(f+\delta_S)(x^k)+\frac{1}{g(x^k)}\left[\frac{1}{\eta_k}(x^{k-1}-x^k)+\theta_k\nabla g(x^{k-1})\right]
\in\partial_L\left(\frac{f+\delta_S}{g}\right)(x^k).$$
Furthermore, notice that $$x_k^*=\frac{1}{g(x^k)}\left[-\theta_{k+1}\nabla g(x^k)+\theta_k\nabla g(x^{k-1})+\frac{1}{\eta_k}(x^{k-1}-x^k)\right]$$
and relying on the Lipschitz continuity of the gradient we derive 
$$\|x_k^*\|\leq \frac{1}{g(x^k)}\left[\left(\frac{1}{\eta_k}+\theta_k L\right)\|x^k-x^{k-1}\|+(\theta_k-\theta_{k+1})\|\nabla g(x^k)\|\right].$$

Altogether, from \eqref{ineq10} we obtain for every $k\geq \max\{\ol k,k_0\}$
$$\e_k\geq \frac{g(x^k)}{g(x^{k+1})}\cdot\frac{\frac{1}{\eta_{k+1}}\|x^{k+1}-x^k\|^2}{\left(\frac{1}{\eta_k}+
\theta_k L\right)\|x^k-x^{k-1}\|+(\theta_k-\theta_{k+1})\|\nabla g(x^k)\|} $$ and from here
\begin{align} \|x^{k+1}-x^k\|\leq & \ \sqrt{\eta_{k+1}\left[\left(\frac{1}{\eta_k}+
\theta_k L\right)\|x^k-x^{k-1}\|+(\theta_k-\theta_{k+1})\|\nabla g(x^k)\|\right]\frac{g(x^{k+1})}{g(x^k)}\e_k}\nonumber\\
\leq & \ \frac{\eta_{k+1}}{2}\left[\left(\frac{1}{\eta_k}+
\theta_k L\right)\|x^k-x^{k-1}\|+(\theta_k-\theta_{k+1})\|\nabla g(x^k)\|\right]+\frac{g(x^{k+1})}{2g(x^k)}\e_k\label{ineq11}.
\end{align}

Further, we observe that 
$$\frac{\eta_{k+1}}{2}\left(\frac{1}{\eta_k}+\theta_k L\right)\leq \frac{\eta_{k}}{2}\left(\frac{1}{\eta_k}+\theta_k L\right)
=\frac{1}{2}+\frac{\eta_k\theta_k L}{2}\leq\frac{1}{2}+\frac{\eta_1\theta_1 L}{2} \ \forall k\geq 1.$$
Moreover, $(\nabla g(x^k))_{k\geq 0}$ is bounded and $\lim\sup_{k\rightarrow+\infty}\frac{g(x^{k+1})}{g(x^k)}<+\infty$, due to $g(x^{k+1})\leq M$ ($M>0$) and 
$\liminf_{k\rightarrow+\infty}g(x^k)>0$, which follows from the continuity of $g$, the fact that $(x^k)_{k\geq 0}$ is bounded and Theorem 
\ref{conv-th2}(ii). Thus there exist some positive constants $C_1,C_2>0$ and $k'\geq 0$ such that 
$$\|x^{k+1}-x^k\|\leq\left( \frac{1}{2}+\frac{\eta_1\theta_1 L}{2}\right)\|x^k-x^{k-1}\|+C_1(\theta_k-\theta_{k+1})+C_2\e_k \ \forall k\geq k'.$$

The conclusion follows from Lemma \ref{fejer2} by noticing that $(\theta_k)_{k\geq 1}$ and $\varphi$ are bounded from below. 

(b) It follows from (a) that $(x^k)_{k\geq 0}$ is a Cauchy sequence, hence it is convergent. The conclusion follows from Theorem \ref{conv-th2}. 
\end{proof}

\section{Future work}\label{sec3}

We point out some open questions to be followed in the future related to the solving of the fractional programming problem under investigation:
\begin{itemize}
\item[1.] Is it possible to evaluate in each iteration the functions $f$ and $\delta_S$ separately, which would actually mean that the set $S$ is addressed in the algorithm by means of its projection operator?

\item[2.] How to incorporate in Algorithm \ref{alg-fr-pr-fb}  some extrapolation terms in the sense of Nesterov in order to improve its speed of convergence?

\item[3.] Can one consider also other situations, for instance when $f$ is smooth and $g$ is 
nonsmooth, or even the more general case where both functions are nonsmooth?
\end{itemize}

\end{document}